\documentclass[a4paper, 12pt]{amsart}

\usepackage[T1]{fontenc}

\usepackage[margin=1in]{geometry}

\usepackage{amscd}

\usepackage{amsmath}

\usepackage{amsfonts}

\usepackage{amssymb}

\usepackage{amsthm}

\usepackage{arydshln}

\usepackage{fancyhdr}

\usepackage{verbatim}

\usepackage{tikz-cd}

\usepackage[all]{xy}

\usepackage{color}

\usepackage[colorlinks=true, linkcolor=blue, citecolor=blue,
pagebackref=true]{hyperref}








\usepackage{mathtools} \usepackage{etoolbox}
\patchcmd{\section}{\scshape}{\bfseries}{}{} \makeatletter
\renewcommand{\@secnumfont}{\bfseries} \makeatother


\theoremstyle{definition}

\theoremstyle{plain} \newtheorem{theorem}{Theorem}[section]

\theoremstyle{plain} \newtheorem{lemma}[theorem]{Lemma}

\theoremstyle{plain} 

\theoremstyle{plain} 

\theoremstyle{plain} 

\theoremstyle{remark} \newtheorem*{remark}{Remark}

\theoremstyle{definition} 

\theoremstyle{definition} \newtheorem*{definition*}{Definition}

\theoremstyle{definition} 

\theoremstyle{remark} 

\makeatletter \renewenvironment{proof}[1][\proofname]
{\par\pushQED{\qed}\normalfont\topsep6\p@\@plus6\p@\relax\trivlist\item[\hskip\labelsep\bfseries#1\@addpunct{.}]\ignorespaces}{\popQED\endtrivlist\@endpefalse}
\makeatother




\newcommand{\EE}{\mathbb{E}}

\newcommand{\PP}{\mathbb{P}}

\newcommand{\RR}{\mathbb{R}}

\newcommand{\NN}{\mathbb{N}}

\newcommand{\ZZ}{\mathbb{Z}}

\newcommand{\calE}{\mathcal{E}}


\renewcommand{\leq}{\leqslant} \renewcommand{\geq}{\geqslant}

\DeclarePairedDelimiter{\abs}{\lvert}{\rvert}

\DeclarePairedDelimiter{\set}{\lbrace}{\rbrace}
\DeclarePairedDelimiter{\parens}{\lparen}{\rparen}
\DeclarePairedDelimiter{\brackets}{\lbrack}{\rbrack}

\def\eps{{\varepsilon}}

\def\1int{{[0,1]}}

\frenchspacing

\title[Approximation by random fractions]{Approximation by random fractions}

\author[L.~Kaziulyt\.e]{Laima Kaziulyt\.e}

\author[F.~A.~Ram{\'i}rez]{Felipe
  A.~Ram{\'i}rez} 

\keywords{Duffin--Schaeffer conjecture, Diophantine approximation, metric number theory, randomization} 

\address{Vilnius University, Lithuania}

\email{laima.kaziulyte@gmail.com}

\address{Wesleyan University,
  Middletown CT, USA} 

\email{framirez@wesleyan.edu}

\date{}

\dedicatory{}

\begin{document}




\maketitle
\thispagestyle{empty}


\begin{abstract}
  We study approximation in the unit interval by rational numbers
  whose numerators are selected randomly with certain
  probabilities. Previous work showed that an analogue of Khintchine's
  Theorem holds in a similar random model and raised the question of
  when the monotonicity assumption can be removed. Informally
  speaking, we show that if the probabilities in our model decay
  sufficiently fast as the denominator increases, then a
  Khintchine-like statement holds without a monotonicity
  assumption. Although our rate of decay of probabilities is unlikely
  to be optimal, it is known that such a result would not hold if the
  probabilities did not decay at all. 
\end{abstract}

\section{Introduction}

Suppose $P=(P_n)_{n=1}^\infty$ is a sequence of subsets
$P_n\subseteq [n]:=\set{1, \dots, n}$. For a function
$\psi:\NN\to [0,1/2]$, let
\begin{equation}\label{W^P}
  W^P(\psi) = \set*{x \in [0,1] : \abs*{x - \frac{a}{n}}<\frac{\psi(n)}{n} \textrm{ for infinitely many } n\in \NN, a\in P_n}. 
\end{equation}
This is the set of real numbers which are ``$\psi$-approximable'' by
fractions whose numerators have been restricted by $P$. One can see
many results of metric Diophantine approximation as statements about
the sets $W^P(\psi)$ for different choices of $P$ and $\psi$. For
example, when $P_n=[n]$ for all $n$, we have Khintchine's Theorem
(\cite{Khintchineonedimensional}, 1924), which states that if $\psi$
is non-increasing, then
\begin{equation}\label{eq:kt}
  \lambda(W^P(\psi)) =
  \begin{cases}
    0 & \textrm{if } \sum_{n=1}^\infty  \psi(n) < \infty \\
    1 & \textrm{if } \sum_{n=1}^\infty \psi(n) = \infty.
  \end{cases}
\end{equation}
Here and throughout, $\lambda$ denotes the Lebesgue measure on
$\RR$. One of the most important problems in metric Diophantine
approximation, the Duffin--Schaeffer conjecture (\cite{DSC}, 1941),
concerns the setting where $P_n= \set{a \in [n] : (a,n)=1}$ for all
$n$, where $(m,n)$ denotes the greatest common divisor of
$m,n\in\NN$. It states that for any $\psi$ we have
\begin{equation}\label{eq:ds}
  \lambda(W^P(\psi)) =
  \begin{cases}
    0 & \textrm{if } \sum_{n=1}^\infty \frac{\varphi(n)\psi(n)}{n}  < \infty \\
    1 & \textrm{if } \sum_{n=1}^\infty \frac{\varphi(n)\psi(n)}{n} =
    \infty.
  \end{cases}
\end{equation}
Importantly, the Duffin--Schaeffer conjecture does not assume that
$\psi$ is monotonic. (D.~Koukoulopoulos and J.~Maynard~\cite{KM} have
recently announced a proof of the Duffin--Schaeffer conjecture.) Many
other results for different choices of $P$ are known. They often
appear under the heading ``Diophantine approximation with restricted
numerators'' (see for example~\cite[Chapter~6]{Harman}).

Recently, the second author studied the sets $W^P(\psi)$ where $P$ is
obtained randomly from the space of all possible such sequences,
endowed with a probability measure. Specifically, for each $n$, choose
$P_n$ uniformly at random from among all $f(n)$-element subsets of
$[n]$, where $f$ is a fixed integer sequence. Here, analogues of
Khintchine's Theorem hold. For example, under an average growth
condition on $f$,~(\ref{eq:kt}) will almost surely hold for any $\psi$
that is monotonic~\cite[Corollary~1.3]{random_fractions}. Furthermore,
the monotonicity assumption cannot be removed if the growth of $f(n)$
satisfies $f(n)/\parens*{n (\log\log n)^{-1}}\to\infty$ as
$n\to\infty$~\cite[Theorem~1.8]{random_fractions}.  A randomized
version of the Duffin--Schaeffer conjecture---where $f$ is the Euler
totient function $\varphi$---is still open. Specifically, with
$f=\varphi$, does~(\ref{eq:ds}) almost surely hold for all $\psi$?  In
general, the question of which sequences $f$ admit the removal of a
monotonicity assumption on $\psi$, is still
open~\cite[Question~1.7]{random_fractions}.

In this note we study a random model for which we are able to
establish a Khintchine-like result without a monotonicity
assumption. Instead of the randomization introduced in
\cite{random_fractions} (discussed above) we randomize
$P=(P_n)_{n=1}^\infty$ in the following way. Fix a sequence
$\set{p_n}_{n=1}^\infty$ of probabilities, $0 \leq p_n \leq 1$, and
for each $a\in [n]$, decide with probability $p_n$ whether $a$ is
included in $P_n$. Our main result is the
following.

\begin{theorem}\label{thm}
  If there exists an $\eps>0$ such that $p_n \ll (\log n)^{-\eps}$, then for any $\psi:\NN \to [0,1/2]$, we
  almost surely have
\begin{equation*}
  \lambda(W^P(\psi)) =
  \begin{cases}
    0 & \textrm{if } \sum_{n=1}^\infty p_n \psi(n)  < \infty \\
    1 & \textrm{if } \sum_{n=1}^\infty p_n \psi(n) = \infty
  \end{cases}
\end{equation*}
(where $P$ is randomized as described just above the theorem).
\end{theorem}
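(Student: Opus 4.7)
The plan is to treat convergence and divergence separately. The convergence direction is a standard Borel--Cantelli computation: with $A_n = \bigcup_{a \in P_n}(a/n - \psi(n)/n, a/n + \psi(n)/n) \cap [0,1]$, one has $\EE[\lambda(A_n)] \leq 2p_n\psi(n)$ by Fubini, so $\sum_n \lambda(A_n) < \infty$ almost surely under the convergence hypothesis, giving $\lambda(W^P(\psi)) = 0$ a.s.

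The divergence direction is the substantive one. For fixed $x \in [0,1]$, the events $\{x \in A_n\}$ depend only on the independent random sets $P_n$ and are therefore independent across $n$. Since $\psi(n) \leq 1/2$, for Lebesgue-a.e.\ $x$ there is at most one $a \in [n]$ with $|nx - a| < \psi(n)$, so $\PP(x \in A_n) = p_n \bone_{B_n}(x)$ where $B_n := \{x \in [0,1] : \|nx\| < \psi(n)\}$. By the second Borel--Cantelli lemma (applied to independent events), $\PP(x \in W^P(\psi)) = 1$ is equivalent to $\sum_n p_n \bone_{B_n}(x) = \infty$. If this holds for Lebesgue-a.e.\ $x$, then Fubini yields $\EE_P[\lambda(W^P(\psi))] = 1$, forcing $\lambda(W^P(\psi)) = 1$ almost surely. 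The divergence case thus reduces to a deterministic statement: under the hypotheses, $\sum_n p_n \bone_{B_n}(x) = \infty$ for Lebesgue-a.e.\ $x$.

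This deterministic statement I would attack by the second-moment method applied to $S_N(x) := \sum_{n \leq N} p_n \bone_{B_n}(x)$. Its Lebesgue mean is $\int S_N \, d\lambda = \sum_{n \leq N} p_n \lambda(B_n) \sim 2 \sum_{n \leq N} p_n \psi(n) \to \infty$, while the second moment equals $\sum_{m, n \leq N} p_m p_n \lambda(B_m \cap B_n)$. A classical overlap estimate — obtained by reducing to coprime indices via the change of variable $y = \gcd(m,n)\cdot x \bmod 1$ — gives $\lambda(B_m \cap B_n) \leq 4\psi(m)\psi(n) + E(m, n)$, where $E(m,n)$ is an arithmetic error depending on $\gcd(m,n)$. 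The main term contributes $(\int S_N)^2$; to close the argument, one needs $\operatorname{Var}(S_N) = o\bigl((\int S_N)^2\bigr)$, which amounts to showing the weighted error sum $\sum p_m p_n E(m,n)$ is $o\bigl((\int S_N)^2\bigr)$. Once this variance bound is in hand, Chebyshev's inequality along a subsequence $N_k$ (with $\int S_{N_k} \to \infty$ fast enough for summability), together with the monotonicity $S_N \leq S_{N+1}$, yields $S_N(x) \to \infty$ for a.e.\ $x$.

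The main obstacle is controlling the weighted error sum $\sum p_m p_n E(m, n)$. This reflects the non-independence of $B_m$ and $B_n$ when $\gcd(m, n)$ is large — the same phenomenon that forces a monotonicity assumption on $\psi$ in Khintchine's theorem. Without monotonicity, the error can be substantial if $\psi$ oscillates in a way that correlates badly with divisor structure. The hypothesis $p_n \ll (\log n)^{-\eps}$ should provide enough damping — roughly, ensuring that large divisors are accompanied by small weights — to keep the error subdominant, but the quantitative verification, amounting to an intricate weighted divisor sum estimate, is where the technical weight of the proof lies.
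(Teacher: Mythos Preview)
Your Fubini reduction in the divergence case---observing that for fixed $x$ the events $\{x\in A_n\}$ are independent across $n$, applying the second Borel--Cantelli lemma, and thereby reducing everything to the \emph{deterministic} statement that $\sum_n p_n\bone_{B_n}(x)=\infty$ for Lebesgue-a.e.\ $x$---is a genuinely different route from the paper's. The paper never makes this reduction; it works throughout with the random sets $\calE_n^Q$, computing expectations and variances of $\sum_{m,n}\lambda(\calE_m^Q\cap\calE_n^Q\cap J)$ over the randomness in $P$ (Sections~\ref{sec:overlaps}--\ref{sec:vari-calc}), and then localizes to subintervals and uses Lebesgue density to pass from positive to full measure. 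Your approach would bypass the probabilistic variance machinery of Sections~\ref{sec:vari-calc}--\ref{sec:proof} entirely, which is a real simplification.

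There is, however, a genuine gap at precisely the point you flag as the ``main obstacle.'' The weighted error sum $\sum_{m,n}p_mp_nE(m,n)$, with $E(m,n)$ carrying the $\gcd(m,n)$-dependence of $\lambda(B_m\cap B_n)$, is \emph{not} controlled by the damping $p_n\ll(\log n)^{-\eps}$ alone: for $n$ with many divisors one finds $\sum_{m\leq N}p_m\gcd(m,n)$ can be as large as $n$ times an unbounded divisor-type factor, and this is exactly the obstruction that forces monotonicity in Khintchine's theorem. The paper's remedy---which you do not mention---is the \emph{partial reduction} of~\cite{LaimaPaper}: replace $B_n$ by $\calE_n^S=\bigcup_{a\in S_n}(a/n-\psi(n)/n,\,a/n+\psi(n)/n)$ with $S_n=\{a\in[n]:(a,n)\leq(\log n)^{\eps/2}\}$. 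This forces any coinciding centres between $\calE_m^S$ and $\calE_n^S$ to satisfy $\gcd(m,n)\geq n/(\log n)^{\eps/2}$, and then $p_m\ll(\log m)^{-\eps}$ makes the coincidence contribution $O\bigl(\sum p_n\psi(n)\bigr)$; this is the content of the computation in Lemma~\ref{lem:expectedoverlaps}. Your second-moment argument would go through with $B_n$ replaced by $\calE_n^S$ (Lemma~\ref{lem:S} keeps the first moment comparable). A secondary point: even after this fix, the overlap estimate typically gives $\int S_N^2\ll(\int S_N)^2$ with an implied constant exceeding~$1$, so Chebyshev alone yields only \emph{positive} measure for $\{x:S_N(x)\to\infty\}$; to get full measure you still need either a zero-one law for the weighted sum (none is known) or a localization to subintervals as in Section~\ref{sec:proof}.
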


\begin{remark}[On notation]
  For two functions $f,g:\NN\to \RR_{\geq 0}$, we use $f \ll g$ to
  denote that there is a constant $C>0$ such that $f(n) \leq C g(n)$
  for all sufficiently large $n$. Unless otherwise noted, the implied
  constant is absolute.
\end{remark}

\begin{remark}[Comparison with randomization from~\cite{random_fractions}]
  The operative difference between our randomization and the one
  in~\cite{random_fractions} is that, here, the events that different $a$'s from $[n]$ are contained in $P_n$
  are totally independent, whereas in the
  other model they are not. The effect of this independence is
  to make certain calculations (particularly variance bounds)
  easier. Yet the two random models are related in a natural way. In
  the present work, the expected cardinality of $P_n$ is $np_n$, so
  for the sake of comparison one may think of $n p_n$ as playing the
  role of $f(n)$. 
\end{remark}

Theorem~\ref{thm} is reminiscent of some of the results that preceded
the Koukoulopoulos--Maynard proof of the Duffin--Schaeffer
conjecture. Namely,
in~\cite{AistleitneretalExtraDivergence,AistleitnerDS,BHHVextraii,HPVextra}
the Duffin--Schaeffer conjecture was proved under certain ``extra
divergence'' and ``slow divergence'' assumptions on the sum
in~(\ref{eq:ds}). The most recent of these
was~\cite{AistleitneretalExtraDivergence}, where Aistleitner \emph{et
  al.}~proved the conjecture with the assumption that there is some
$\eps>0$ for which $\sum \frac{\varphi(n)\psi(n)}{n (\log n)^\eps}$
diverges. Relatedly, the first author showed that Khintchine's Theorem
holds without the monotonicity assumption if one assumes that there
exists some $\eps>0$ for which $\sum \frac{\psi(n)}{(\log n)^\eps}$
diverges~\cite{LaimaPaper}. The proof of Theorem~\ref{thm} relies in
part on a ``partial reduction'' method from~\cite{LaimaPaper}, to be
explained in due course.

We hasten to point out that all of the results in the previous
paragraph are subsumed by the Duffin--Schaeffer conjecture
itself. However, none of the random results and conjectures seem to
be. In particular, Theorem~\ref{thm} can be viewed as partial progress
toward a randomized version of the Duffin--Schaeffer conjecture,
analogous to the progress toward the classical Duffin--Schaeffer
conjecture that the results in the previous paragraph
represented. With our randomization, the analogue of the
Duffin--Schaeffer conjecture would come from setting
$p_n = \varphi(n)/n$ for each $n$ in the statement of
Theorem~\ref{thm}. With this analogy in mind, it is believable that a
statement like Theorem~\ref{thm}, with $p_n \ll (\log n)^{-\eps}$
replaced by $p_n \ll (\log\log n)^{-\eps}$ (now with $\eps\geq 1$), is
also true, and is perhaps a natural ``next step'' in our present
efforts (short of pursuing the full analogue of
Duffin--Schaeffer). This would parallel a recent ``extra divergence''
breakthrough of Aistleitner (appearing on the
arXiv~\cite{AistleitnerRecent} but not intended for publication
because it was subsumed by~\cite{KM}), where the extra divergence
assumption from~\cite{AistleitneretalExtraDivergence} is relaxed to
there existing $\eps>0$ for which
$\sum \frac{\varphi(n)\psi(n)}{n(\log \log n)^\eps}$ diverges.

\section{Strategy of the proof}
\label{sec:strategy-proof}

Let us hereby fix $\eps\in (0,1)$ and $p_n$ ($n\in\NN$) as in the
statement of Theorem~\ref{thm}. (It is easy to see that no generality
is lost in making this stronger assumption on $\eps$.) Now let $P$ be
chosen as in the theorem. Note that for any $\psi: \NN \to [0, 1/2]$
we can write
\begin{equation*}
W^P(\psi) = \limsup_{n\to\infty} \calE_n^P,
\end{equation*}
where
\begin{equation*}
  \calE_n^P:=\calE_n^P(\psi) = \bigcup_{a\in
    P_n}\parens*{\frac{a-\psi(n)}{n}, \frac{a+\psi(n)}{n}}
\end{equation*}
for $\psi:\NN\to [0,1/2]$ and $n\in \NN$.

The convergence parts of the results in the previous section are all
easy applications of the Borel--Cantelli lemma to the ``limsup'' set
$W^P(\psi)$. The divergence parts require more work. One common
strategy is to first establish some form of independence among the
sets $\calE_n^P$ and then use the following lemma to show that $\lambda\parens*{W^P(\psi)}$ is
positive. (Getting full measure usually requires another ingredient,
to be discussed.) 

\begin{lemma}[{\cite[Lemma~2.3]{Harman}}]\label{lem:reversebc}
  Let $(X,\lambda)$ be a finite measure space, and $\calE_n$ a
  sequence of measurable subsets of $X$ such that
  $\sum\lambda(\calE_n)$ diverges. Then 
  \begin{equation}\label{eq:qia}
    \lambda\parens*{\limsup_{n\to\infty} \calE_n} \geq \limsup_{N\to\infty} \parens*{\sum_{n=1}^N\lambda(\calE_n)}^2\parens*{\sum_{m,n = 1}^N\lambda(\calE_m\cap\calE_n)}^{-1}. 
  \end{equation}
\end{lemma}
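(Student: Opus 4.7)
The plan is to prove \eqref{eq:qia} by the Paley--Zygmund / second-moment method applied to the counting functions $f_N=\sum_{n=1}^N \bone_{\calE_n}$, followed by a tail-shift argument that converts a lower bound on $\lambda\parens*{\bigcup_{n=1}^N \calE_n}$ into a lower bound on $\lambda\parens*{\limsup_n \calE_n}$.

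First I would compute $\int f_N\,d\lambda = \sum_{n=1}^N \lambda(\calE_n)$ and $\int f_N^2\,d\lambda = \sum_{m,n=1}^N \lambda(\calE_m\cap\calE_n)$, and apply the Cauchy--Schwarz inequality to the factorization $f_N = f_N\cdot \bone_{\{f_N>0\}}$. Since $\{f_N>0\} = \bigcup_{n=1}^N \calE_n$, this gives
\[
\lambda\parens*{\bigcup_{n=1}^N \calE_n} \;\geq\; \frac{\parens*{\sum_{n=1}^N \lambda(\calE_n)}^2}{\sum_{m,n=1}^N \lambda(\calE_m\cap \calE_n)},
\]
which is the target inequality, but with the ``head'' union $\bigcup_{n\leq N}\calE_n$ in place of $\limsup_n \calE_n$.

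To bridge this gap I would rerun the Cauchy--Schwarz step on the shifted family $(\calE_n)_{n\geq K}$ for each fixed $K\geq 1$. Writing $B_K := \bigcup_{n\geq K}\calE_n$, $S_N := \sum_{n=1}^N \lambda(\calE_n)$, and $T_N := \sum_{m,n=1}^N \lambda(\calE_m\cap\calE_n)$, the same argument delivers, for every $N\geq K$,
\[
\lambda(B_K) \;\geq\; \lambda\parens*{\bigcup_{n=K}^N \calE_n} \;\geq\; \frac{(S_N-S_{K-1})^2}{\sum_{m,n=K}^N \lambda(\calE_m\cap \calE_n)} \;\geq\; \frac{(S_N-S_{K-1})^2}{T_N},
\]
where the last step just drops extra nonnegative terms in the denominator.

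The divergence hypothesis $S_N\to\infty$ now enters: for any fixed $K$ we have $(S_N-S_{K-1})^2/S_N^2\to 1$, so
\[
\limsup_{N\to\infty}\frac{(S_N-S_{K-1})^2}{T_N} \;=\; \limsup_{N\to\infty}\frac{S_N^2}{T_N}.
\]
Hence $\lambda(B_K)\geq \limsup_N S_N^2/T_N$ for every $K$. Because $\lambda$ is a finite measure and $B_K$ decreases to $\limsup_{n\to\infty}\calE_n$, continuity of measure from above gives $\lambda(\limsup_n \calE_n) = \lim_K \lambda(B_K) \geq \limsup_N S_N^2/T_N$, which is precisely \eqref{eq:qia}. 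There is no substantial obstacle in the argument; the only point meriting care is verifying that shifting the starting index from $1$ to $K$ does not alter the $\limsup$ of the lower bound, and this is exactly where $\sum_n \lambda(\calE_n)=\infty$ is used.
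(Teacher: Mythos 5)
Your argument is correct: the Cauchy--Schwarz bound $\lambda\parens*{\bigcup_{n=K}^N\calE_n}\geq (S_N-S_{K-1})^2/T_N$, the observation that divergence of $\sum\lambda(\calE_n)$ makes the shift by $K$ harmless in the $\limsup$, and continuity from above (using finiteness of $\lambda$) together give exactly \eqref{eq:qia}. The paper itself offers no proof --- it simply cites Harman's Lemma~2.3 --- and your second-moment tail-shift argument is essentially the standard proof given in that reference, so there is nothing further to reconcile.
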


Rather than apply this lemma to the sets $\calE_n^P$, we work with
certain subsets of $\calE_n^P$, which we presently describe. Instead
of considering all of $P$, we restrict attention to the elements
$a\in P_n$ such that $(a,n)\le(\log n)^{\varepsilon/2}$.  (This is the
``partial reduction'' which we shortly mentioned in the previous section.) Set the
notation
\begin{align}
S_n &= \set*{a \in [n] : (a,n) \leq (\log n)^{\eps/2}}\label{S_n}\\
\intertext{and}
  \calE_n^{S}:=\calE_n^{S}(\psi) &= \bigcup_{a\in S_n}\parens*{\frac{a-\psi(n)}{n}, \frac{a+\psi(n)}{n}},\label{E_n}
\end{align}
where $S=(S_n)_{n\in\NN}$. The sets $\calE_n^{S}$ were introduced
in~\cite{LaimaPaper}, where they were shown to be of comparable size
and to have better independence properties than the analogous
non-reduced sets with $a\in[n]$. Let $Q=(Q_n)_{n\in\NN}$, where
$Q_n = P_n \cap S_n$, and define $\calE_n^Q$ in the obvious
way. Throughout most of this paper, we will work directly with the
sets $\calE_n^Q$ instead of $\calE_n^P$.

When the right-hand side of~(\ref{eq:qia}) is positive, one says that
the sets $\calE_n$ are quasi-independent on average (QIA), and
Lemma~\ref{lem:reversebc} implies that the corresponding ``limsup'' set
has positive measure. In many situations one can upgrade this to
full measure by applying the appropriate ``zero-one law'' (see for
instance the zero-one laws of Cassels~\cite{cassels} and
Gallagher~\cite{gall}). But no such zero-one law has been proved in
our random setting. To get around this, we seek to apply
Lemma~\ref{lem:reversebc} with $\calE_n = \calE_n^Q\cap J$, where $J$
is some arbitrarily fixed subinterval of $[0,1]$. We will show that
almost surely the right-hand side of~(\ref{eq:qia}) is bounded below
by an absolute constant times the length of $J$. Then the Lebesgue
density theorem together with Lemma~\ref{lem:reversebc} will imply
that $\limsup\calE_n^Q$ has full measure in the unit interval, hence
so does its superset $W^P(\psi)$.

The content of the rest of this paper is as follows. We estimate the expected value and variance of the pairwise overlap $\lambda\parens*{\calE_m^Q\cap\calE_n^Q \cap J}$ (on average)
in Section \ref{sec:overlaps} and \ref{sec:vari-calc} respectively. Section \ref{sec:measures} is for showing that these estimates are small and can be compared with the measures of approximation sets $\lambda\parens*{\calE_n^Q \cap J}$. The application of Chebyshev's  inequality then implies QIA, almost surely, in Section~\ref{sec:proof}. In the same section the proofs of both divergence and convergence part of the theorem are provided. 

\section{Measures of approximation sets}
\label{sec:measures}
The result of Lemma~\ref{lem:S} below is used in
Lemma~\ref{lem:asmoas} , which, in turn, later is needed to prove QIA,
almost surely.

The next lemma, which is adapted from~{\cite[Lemma 2 and Corollary
  3]{LaimaPaper}}, shows that we have neither lost too many elements
nor too much measure by restricting attention to partially reduced
fractions.

\begin{lemma}\label{lem:S}
  For any fixed arc $J\subset \RR/\ZZ$ we have
  \begin{equation*}
    \abs{S_n\cap nJ} \gg \lambda(J)\eps n
    \quad
    \textrm{and hence}
    \quad
    \lambda\parens*{\calE_n^S\cap J} \gg \lambda(J)\eps\psi(n)
  \end{equation*}
  with universal implied constants, where $S_n$ and $\calE_n^S$ are
  defined by (\ref{S_n}) and (\ref{E_n}) respectively for fixed
  $\varepsilon\in(0,1)$.
\end{lemma}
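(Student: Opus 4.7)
My plan is to establish the first inequality $|S_n\cap nJ|\gg\lambda(J)\eps n$ via a M\"obius sieve and a divisor-sum lower bound, and then to deduce the second inequality as a direct geometric consequence of the first.

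For the second inequality: since $\psi(n)\leq 1/2$, the open intervals comprising $\calE_n^S$ are pairwise disjoint (consecutive centres $a/n$ are $1/n$ apart and each interval has length $2\psi(n)/n\leq 1/n$). Hence
\[
\lambda(\calE_n^S \cap J) \;\geq\; \frac{2\psi(n)}{n}\,\bigl|S_n \cap nJ^\ast\bigr|,
\]
where $J^\ast$ is $J$ shrunk by $\psi(n)/n$ on each side, so that every interval centred at $a/n\in J^\ast$ lies wholly in $J$. For $n$ large enough that $\lambda(J^\ast)\geq\lambda(J)/2$, the first inequality applied to $J^\ast$ gives $\lambda(\calE_n^S \cap J) \gg \lambda(J)\eps\psi(n)$.

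For the first inequality I partition by the value of $d := (a,n)$:
\[
|S_n \cap nJ| \;=\; \sum_{\substack{d\mid n\\ d\leq (\log n)^{\eps/2}}} \#\{a\in [n]\cap nJ : (a,n)=d\}.
\]
Substituting $a=db$ transforms the inner count into the number of $b\in [n/d]$ coprime to $n/d$ lying in a dilated arc of length $\lambda(J)n/d$; a standard M\"obius sieve evaluates this as $\lambda(J)\varphi(n/d) + O(2^{\omega(n/d)})$. Summing and using $\tau(n),2^{\omega(n)} = n^{o(1)}$ reduces the task to proving
\[
\sum_{\substack{d\mid n\\ d\leq (\log n)^{\eps/2}}}\varphi(n/d) \;\gg\; \eps n.
\]

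This divisor-sum lower bound is the main obstacle. Using $\{p:p\mid n/d\}\subseteq\{p:p\mid n\}$ to get $\varphi(n/d)\geq \varphi(n)/d$, the task reduces further to
\[
\frac{\varphi(n)}{n}\sum_{\substack{d\mid n\\d\leq (\log n)^{\eps/2}}}\frac{1}{d}\;\gg\;\eps.
\]
The unrestricted sum $\sum_{d\mid n}1/d$ together with $\varphi(n)/n$ satisfies $(\varphi(n)/n)\sum_{d\mid n}1/d \geq 6/\pi^2$ (via the identity $\sigma(n)\varphi(n) = n^2\prod_{p\mid n}(1-p^{-e_p-1})$), so the real question is whether the truncation at $(\log n)^{\eps/2}$ captures a $\gg\eps$ proportion of the mass. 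For $n$ with few prime factors, $\varphi(n)/n\gg 1$ already suffices; for $n$ with many small prime factors, an Erd\H{o}s--Kac-style concentration for $\log d$ weighted by $1/d$ shows that a constant proportion of the weighted mass sits below $(\log n)^{\eps/2}$. Making this uniform over $n$ is precisely the content of Lemma~2 and Corollary~3 in~\cite{LaimaPaper}, which this proof would invoke directly.
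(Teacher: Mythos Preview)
Your proposal is correct and follows essentially the same route as the paper: partition $S_n\cap nJ$ according to $d=(a,n)$, reduce each piece to a count of reduced residues in an arc (the paper quotes \cite[Lemma~5.2]{random_fractions} where you do a direct M\"obius sieve with error $O(2^{\omega(n/d)})$), and then invoke \cite[Lemma~2]{LaimaPaper} for the bound $\sum_{d\mid n,\,d\leq(\log n)^{\eps/2}}\varphi(n/d)\gg\eps n$. The paper leaves the ``and hence'' deduction of the second inequality to the reader, so your geometric argument via $J^\ast$ is a welcome addition.
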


\begin{remark}[On notation]
  We denote by $\abs{S}$ the cardinality of a finite set $S$.
\end{remark}

\begin{proof}
  Let us consider $S_{n}^{(1)}:=\set*{a\in [n] : (a,n)=1}$. Then, from
  {\cite[Lemma 5.2]{random_fractions}} we get that there is some
  $n_0(J)$ such that
  \begin{equation}
    |S_{n}^{(1)} \cap n J|\ge\frac{1}{2}\varphi(n)\lambda(J) \qquad \textrm{for all} \qquad
    n\ge n_0(J).\label{eq:phicount}
  \end{equation}
  Similarly, if $d|n$ denote $S_n^{(d)}:=\set*{a\in [n] : (a,n)=d}$.
  Notice that $S_n^{(d)}$ is exactly the set
  $\set*{d\cdot b : b\in [n/d], (b,n/d)=1}$. That is,
  $S_n^{(d)} = d S_{n/d}^{(1)}$. Notice also that
  \begin{equation*}
    S_n\cap nJ = \bigcup_{\substack{d \mid n \\ d \leq (\log n)^{\eps/2}}} S_n^{(d)} \cap nJ,
  \end{equation*}
  and that the union is disjoint. Therefore, 
  \begin{align*}
    \abs{S_n\cap nJ}  &= \sum_{\substack{d \mid n \\ d \leq (\log n)^{\eps/2}}} \abs*{S_n^{(d)} \cap nJ} \\
               &= \sum_{\substack{d \mid n \\ d \leq (\log n)^{\eps/2}}} \abs*{S_{n/d}^{(1)} \cap \parens*{\frac{n}{d}} J}.
  \end{align*}
  From~(\ref{eq:phicount}), each summand has a lower bound
  $$
  \abs*{S_{n/d}^{(1)}\cap \parens*{\frac{n}{d}}J} \ge
  \frac{1}{2}\varphi\left(\frac{n}{d}\right)\lambda(J)
  $$
  as long as $n/d \ge n_0(J)$. In particular, if
  $n (\log n)^{-\eps/2} \geq n_0(J)$, then
  \begin{align*}
    \abs{S_n\cap nJ} &\geq \frac{1}{2} \lambda(J) \underbrace{\sum_{\substack{d \mid n \\ d \leq (\log n)^{\eps/2}}} \varphi\parens*{\frac{n}{d}}}_{\gg \eps n}\\
              &\gg \lambda(J)\eps n,
  \end{align*}
  where the last step is achieved in~\cite[Lemma 2]{LaimaPaper}. This
  proves the lemma.
\end{proof}

In the following lemma we show that the sum of interest in the
statement of Theorem~\ref{thm} is almost surely a lower bound for the
sum of measures of approximation sets for infinitely many partial sums.

\begin{lemma}[Almost sure measure of approximation sets]\label{lem:asmoas}
  Assume we are in the divergence case of Theorem~\ref{thm}. Then
  there exists an absolute constant $C>0$ and an increasing integer
  sequence $\set{N_t}_{t=1}^\infty$ such that for any fixed arc
  $J\subset\RR/\ZZ$ it is almost surely the case that
  \begin{equation*}
    \sum_{n=1}^{N_t} \lambda\parens*{\calE_n^Q\cap J} \geq C \lambda(J)\varepsilon \sum_{n=1}^{N_t} p_n\psi(n)
  \end{equation*}
  holds for all but finitely many $t$. 
\end{lemma}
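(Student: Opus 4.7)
The plan is to apply Chebyshev's inequality along a carefully chosen subsequence $\{N_t\}$ and then close with the first Borel--Cantelli lemma. The structural advantage of our model is that the indicators $\mathbf{1}_{a\in P_n}$ are mutually independent both across $a\in[n]$ and across $n\in\NN$, which makes the variance computations tractable.

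Set $X_n := \lambda(\calE_n^Q\cap J)$. By linearity, $\EE[X_n] = p_n\,\lambda(\calE_n^S\cap J)$, which Lemma~\ref{lem:S} bounds below by $c_0\,\eps\,\lambda(J)\,p_n\,\psi(n)$ once $n$ exceeds a threshold depending on $J$. Exploiting independence of the Bernoulli indicators within a single level $n$, together with the trivial bound $\lambda(I_a\cap J)^2 \leq (2\psi(n)/n)\,\lambda(I_a\cap J)$ (where $I_a$ is the interval of length $2\psi(n)/n$ centered at $a/n$), gives
\[
  \mathrm{Var}(X_n) \leq \tfrac{2\psi(n)}{n}\,p_n\,\lambda(\calE_n^S\cap J) \leq \frac{2\,p_n\,\psi(n)\,\lambda(J)}{n}.
\]
Independence across $n$ then upgrades this to $\mathrm{Var}\big(\sum_{n\leq N} X_n\big) \leq 2\lambda(J)\,T(N)$, where $T(N) := \sum_{n\leq N} p_n\psi(n)/n$. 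Writing $S(N) := \sum_{n\leq N} p_n\psi(n)$ for the sum in the theorem, Chebyshev's inequality then yields
\[
  \PP\Big(\textstyle\sum_{n\leq N} X_n \leq \tfrac{c_0}{4}\,\eps\,\lambda(J)\,S(N)\Big) \ll \frac{T(N)}{\eps^2\,\lambda(J)\,S(N)^2}
\]
for any $N$ large enough (depending on $J$) that $\EE\big[\sum_{n\leq N} X_n\big] \geq \tfrac12 c_0\,\eps\,\lambda(J)\,S(N)$.

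The crux is the choice of $\{N_t\}$, and this is handled by the trivial observation that $T(N) \leq S(N)$ for all $N$ (since $1/n \leq 1$). Combined with $S(N) \to \infty$ from the divergence hypothesis, this forces $T(N)/S(N)^2 \to 0$, so I can select an increasing integer sequence $\{N_t\}$, depending only on $(\eps,p_n,\psi)$ and not on $J$, such that $T(N_t)/S(N_t)^2 \leq 1/t^2$. For any fixed arc $J$ the Chebyshev probabilities above, evaluated at $N=N_t$, are then summable in $t$, and the first Borel--Cantelli lemma finishes the proof with $C = c_0/4$. I do not foresee a serious obstacle; the only mild wrinkle is that the lower bound on $\EE[X_n]$ from Lemma~\ref{lem:S} holds only for $n \geq n_0(J)$, but the finitely many missing contributions are harmless and can be absorbed into the ``all but finitely many $t$'' qualifier.
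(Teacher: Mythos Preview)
Your proposal is correct and follows essentially the same route as the paper: lower-bound the expectation via Lemma~\ref{lem:S}, bound the variance using the independence of the Bernoulli indicators within and across levels $n$, apply Chebyshev's inequality, and then select a sparse sequence $\{N_t\}$ (depending only on $S(N)=\sum_{n\leq N}p_n\psi(n)$, not on $J$) so that the first Borel--Cantelli lemma closes the argument. The only cosmetic differences are that you track $T(N)=\sum_{n\leq N}p_n\psi(n)/n$ separately before invoking $T(N)\leq S(N)$, whereas the paper absorbs this directly into the expectation bound, and your variance step uses $\lambda(I_a\cap J)^2\leq (2\psi(n)/n)\lambda(I_a\cap J)$ in place of the paper's $|S_n|\leq n$.
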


\begin{proof}
  Note that
  \begin{align*}
   \lambda\parens*{\calE_n^Q\cap J} \geq \frac{\psi(n)}{n} \abs{Q_n \cap nJ}.
  \end{align*}
  Recall that any element of $S_n$ has a probability $p_n$ of being
  selected for a membership in $Q_n$. Therefore,
  $\EE\parens*{\abs*{Q_n\cap nJ}} = p_n \abs{S_n\cap nJ}$, and
  \begin{align*}
   \EE\parens*{\lambda\parens*{\calE_n^Q\cap J}}\geq \frac{\psi(n)}{n} p_n\abs{S_n\cap nJ}.
  \end{align*}
  Hence, 
  \begin{align}
    \EE\parens*{\sum_{n=1}^{N} \lambda\parens*{\calE_n^Q\cap J}} 
    &\geq \sum_{n=1}^{N} \frac{\psi(n)}{n} p_n\abs{S_n\cap nJ} \notag \\
    &\gg \lambda(J)\eps \sum_{n=1}^{N} p_n\psi(n)\label{eq:expectedmeasure}
  \end{align}
  by Lemma~\ref{lem:S}.
  On the other hand, for a variance we have
  $\sigma^2\parens*{\abs*{Q_n\cap nJ}} = p_n (1-p_n)\abs{S_n\cap nJ}$,
  hence
  \begin{align*}
    \sigma^2\parens*{\lambda\parens*{\calE_n^Q\cap J}} &\leq \parens*{2\frac{\psi(n)}{n}}^2 p_n(1-p_n)\abs{S_n},
  \end{align*}
  since 
  $$\lambda\parens*{\calE_n^Q\cap J} \le \frac{2\psi(n)}{n} \abs{Q_n \cap nJ}.$$
  Now, since all selections for $Q_n, n=1,2,\ldots$ occur independently, we have
  \begin{align*}
    \sigma^2\parens*{\sum_{n=1}^{N} \lambda\parens*{\calE_n^Q\cap J}} 
    &\leq \sum_{n=1}^{N} \parens*{2\frac{\psi(n)}{n}}^2 p_n(1-p_n)\abs{S_n}\\
    &\leq\sum_{n=1}^{N} 4 \frac{\psi(n)^2}{n} p_n(1-p_n) \\
    &\leq \sum_{n=1}^{N} 4 \frac{\psi(n)}{n} p_n \\
    &\overset{\textrm{(\ref{eq:expectedmeasure})}}{\ll}_{\varepsilon, J} \EE\parens*{\sum_{n=1}^{N} \lambda\parens*{\calE_n^Q\cap J}}.
  \end{align*}
  Thus Chebyshev's inequality implies
  \begin{align*}
    \PP\brackets*{\sum_{n=1}^{N} \lambda\parens*{\calE_n^Q\cap J} \leq \frac{1}{2}\EE\parens*{\sum_{n=1}^{N} \lambda\parens*{\calE_n^Q\cap J}}} 
    &\ll_{\varepsilon, J} \frac{1}{\EE\parens*{\sum_{n=1}^{N} \lambda\parens*{\calE_n^Q\cap J}}}\\
    &\overset{\textrm{(\ref{eq:expectedmeasure})}}{\ll}_{\varepsilon, J} \parens*{\lambda(J)\eps \sum_{n=1}^{N} p_n\psi(n)}^{-1}.
  \end{align*}
  Let $\set{N_t}_{t=1}^\infty$ be an increasing integer sequence which is sparse
  enough that
  \begin{equation*}
    \sum_{t=1}^\infty \parens*{\sum_{n=1}^{N_t} p_n\psi(n)}^{-1} < \infty. 
  \end{equation*}
  Then, by the Borel--Cantelli lemma, we almost surely have
  \begin{equation*}
    \sum_{n=1}^{N_t}\lambda\parens*{\calE_n^Q\cap J} \geq \frac{1}{2}\EE\parens*{\sum_{n=1}^{N_t} \lambda\parens*{\calE_n^Q\cap J}}
  \end{equation*}
  for all but finitely many $t$. Combining this
  with~(\ref{eq:expectedmeasure}) proves the
  lemma. 
\end{proof}




\section{Expected overlaps of approximation sets}
\label{sec:overlaps}

It is worth noting that the results from Section~\ref{sec:measures}
hold for any subinterval $J\subset [0,1]$. By contrast, the
results in this and the next section are stated with a randomized
interval $J$. That is, we randomly choose $P$ as before, and also
independently randomly choose $J$, so that our probability space is
extended. To avoid confusion, we will use the notation $\overline\PP$,
$\overline\EE$ and $\overline\sigma^2$ to denote the probability measure,
expectation and variance, respectively, in the extended space. We use
$\PP, \EE, \sigma^2$ for the space of $P$'s.

\begin{lemma}[Expected average overlap estimates]\label{lem:expectedoverlaps}
  Let $M\in \NN$ and express $[0,1] = J_1\cup\dots\cup J_M$, where the
  $J_i$ are disjoint subintervals of length $1/M$. Let
  $J\in \set{J_1, \dots, J_M}$ be chosen randomly and uniformly. Then
  \begin{equation*}
    \overline\EE\brackets*{\sum_{1\leq m, n \leq N} \lambda\parens*{\calE_m^Q\cap\calE_n^Q \cap J}} \ll \lambda(J) \parens*{\sum_{n=1}^N p_n \psi(n)}^2.
  \end{equation*}
\end{lemma}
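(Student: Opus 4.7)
The plan is to first use the independence of $J$ from $Q$ to peel off the averaging over $J$. Since the $J_i$ all have length $\lambda(J)=1/M$ and partition $[0,1]$, for any fixed measurable set $A\subset[0,1]$ we have $\overline\EE_J[\lambda(A\cap J)] = \frac{1}{M}\lambda(A) = \lambda(J)\,\lambda(A)$. Applying this with $A = \calE_m^Q\cap\calE_n^Q$ (which depends only on $P$, not on $J$) reduces the lemma to the $J$-free estimate
\[
\EE\brackets*{\sum_{m,n=1}^N\lambda\parens*{\calE_m^Q\cap\calE_n^Q}} \ll \parens*{\sum_{n=1}^N p_n\psi(n)}^2.
\]

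I would then split into off-diagonal and diagonal contributions. The assumption $\psi(n)\leq 1/2$ ensures that the intervals $I_{n,a}:=((a-\psi(n))/n,(a+\psi(n))/n)$, $a\in[n]$, are essentially pairwise disjoint, so
\[
\lambda\parens*{\calE_m^Q\cap\calE_n^Q} = \sum_{a\in Q_m}\sum_{b\in Q_n}\lambda(I_{m,a}\cap I_{n,b}).
\]
For $m\neq n$ the families $Q_m$ and $Q_n$ are independent, and $\PP[a\in Q_n]$ equals $p_n$ for $a\in S_n$ and vanishes otherwise, so taking expectation gives
\[
\EE\brackets*{\lambda\parens*{\calE_m^Q\cap\calE_n^Q}} = p_m p_n\,\lambda\parens*{\calE_m^S\cap\calE_n^S}.
\]
The diagonal $m=n$ contributes at most $\sum_n\EE[\lambda(\calE_n^Q)] \leq 2\sum_n p_n\psi(n)$, which is dominated by $(\sum p_n\psi(n))^2$ in the regime where the lemma is ultimately applied (namely the divergence case, for $N$ large).

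The crux, then, is a deterministic pairwise overlap bound for the partially reduced sets of the shape $\lambda(\calE_m^S\cap\calE_n^S)\ll\psi(m)\psi(n)$ for $m\neq n$. This is precisely where the partial reduction pays off: the restriction $(a,m)\leq(\log m)^{\eps/2}$ eliminates the ``resonance'' contributions from pairs $(a,b)$ whose reduced fractions share a small effective common denominator, which are exactly the troublesome pairs in any Duffin--Schaeffer-style overlap estimate. I would invoke (or adapt) the corresponding overlap bound for the $\calE_n^S$ established in~\cite{LaimaPaper}. Once that is in hand, the off-diagonal sum is controlled by
\[
\sum_{m\neq n}p_m p_n\psi(m)\psi(n)\leq\parens*{\sum_{n=1}^N p_n\psi(n)}^2,
\]
and restoring the factor $\lambda(J)$ completes the proof. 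I expect the main obstacle to be importing the overlap estimate in the precise form needed here, since securing an absolute implicit constant (rather than one with polynomial loss in $\eps^{-1}$ or logarithmic loss in $m,n$) requires a careful divisor-counting argument on the partial-reduction side.
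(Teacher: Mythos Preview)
Your reduction to the $J$-free estimate and the identification $\EE[\lambda(\calE_m^Q\cap\calE_n^Q)]=p_mp_n\,\lambda(\calE_m^S\cap\calE_n^S)$ for $m\neq n$, together with the diagonal handling, match the paper exactly. The gap is in the ``crux'': the deterministic pairwise bound $\lambda(\calE_m^S\cap\calE_n^S)\ll\psi(m)\psi(n)$ is \emph{not} available, and it is not what~\cite{LaimaPaper} proves. The partial reduction forces any coinciding centre $a/m=b/n$ to have reduced denominator at least $n/(\log n)^{\eps/2}$, but for a single pair $(m,n)$ this still permits on the order of $n/(\log n)^{\eps/2}$ coincidences; when $\psi(m),\psi(n)$ are small (say $\asymp n^{-1/2}$) the resulting contribution $\asymp\psi(n)(\log n)^{-\eps/2}$ is far larger than $\psi(m)\psi(n)$. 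So the obstacle you anticipate is not merely a matter of constants---the inequality fails outright.

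The paper does not attempt a pairwise bound. Instead it splits $\lambda(\calE_m^S\cap\calE_n^S)$ into a non-coinciding-centres part, which does obey the Gallagher-type estimate $\ll\psi(m)\psi(n)$, and a coinciding-centres part. The latter is kept weighted by $p_mp_n$, summed over $m<n$, and only then controlled: the admissible $m$ lie in $[n/(\log n)^{\eps/2},n)$ and are multiples of $n/d$ for some $d\mid n$ with $d\leq(\log n)^{\eps/2}$, and crucially $p_m\ll(\log m)^{-\eps}\asymp(\log n)^{-\eps}$ on this range. The trivial divisor bound $\sum_{d\mid n,\,d\leq(\log n)^{\eps/2}}d\leq(\log n)^{\eps}$ then exactly cancels the gain from $p_m$, leaving a total contribution $\ll\sum_n p_n\psi(n)$. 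This is the only place in the proof of the lemma where the hypothesis $p_n\ll(\log n)^{-\eps}$ is used; your sketch never invokes it, which signals that the coinciding-centres term has been dropped rather than handled.
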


\begin{proof}
  First, observe that
  \begin{align}
    \overline\EE\brackets*{\sum_{1\leq m, n \leq N} \lambda\parens*{\calE_m^Q\cap\calE_n^Q \cap J}} 
    &= \EE\brackets*{\sum_{1\leq m, n \leq N} \frac{1}{M} \sum_{i=1}^M \lambda\parens*{\calE_m^Q\cap\calE_n^Q \cap J_i}}\notag \\
    &= \lambda(J) \EE\brackets*{\sum_{1\leq m, n \leq N} \lambda\parens*{\calE_m^Q\cap\calE_n^Q}},\label{eq:inviewof}
  \end{align}
  so let us bound this last expectation.
  
  Note that for any $m\neq n$, $\calE_m^S$ and $\calE_n^S$ are unions
  of disjoint intervals, hence so is their intersection
  $\calE_m^S\cap \calE_n^S$. Since $\calE_m^Q$ and $\calE_n^Q$ are
  obtained by selecting from the constituent intervals of $\calE_m^S$
  and $\calE_n^S$, it is clear that the intersection
  $\calE_m^Q\cap \calE_n^Q$ is a union of disjoint intervals, each
  chosen from $\calE_m^S\cap \calE_n^S$. Indeed, any of the intervals
  making up $\calE_m^S\cap \calE_n^S$ has probability $p_mp_n$ of
  being selected. Therefore, by linearity of expectation, we have
  \begin{equation*}
    \EE\brackets*{\lambda\parens*{\calE_m^Q\cap\calE_n^Q}} = p_mp_n\lambda\parens*{\calE_m^S\cap\calE_n^S}.
  \end{equation*}
  On the other hand, if $m=n$, then we have 
  \begin{equation*}
    \EE\brackets*{\lambda\parens*{\calE_m^Q\cap\calE_n^Q}} = p_n\lambda\parens*{\calE_n^S}.
  \end{equation*}
  Thus
  \begin{align}
    \EE\brackets*{\sum_{m,n=1}^N \lambda\parens*{\calE_m^Q\cap\calE_n^Q}}\notag
    &=\sum_{m,n=1}^N\EE\brackets*{\lambda\parens*{\calE_m^Q\cap\calE_n^Q}} \\
    &=\sum_{\substack{m,n=1 \\ m\neq n}}^Np_mp_n\lambda\parens*{\calE_m^S\cap\calE_n^S} + \sum_{n=1}^Np_n\lambda\parens*{\calE_n^S} \notag \\
    &\le 8\sum_{m=1}^N\sum_{n=1}^Np_mp_n\psi(m)\psi(n)+\sum_{n=1}^N p_n\lambda\parens*{\calE_n^S}\notag\\
    &\quad+4\sum_{n=1}^Np_n\frac{\psi(n)}{n}\sum_{\substack{n>m\ge n/(\log n)^{\eps/2}\\ (n,m)\ge n/(\log n)^{\eps/2}}}p_m\cdot m. \label{eq:centerscoincide}
  \end{align}
  The first of the three summand after the last inequality above  comprises
  the elementary bounds on measures of intervals whose centers do not
  coincide (see~\cite[Page 39]{Harman}); the last summand accounts for
  the intervals whose centers do coincide (see~\cite[Page 176]{Harman}
  and~\cite[Page 7]{LaimaPaper}). Remembering that
  $p_m \ll (\log m)^{-\eps}$,

  \begin{equation*}
    \sum_{\substack{n>m\ge n/(\log n)^{\eps/2}\\ (n,m)\ge n/(\log n)^{\eps/2}}}p_m\cdot m 
 \ll \frac{n}{(\log n)^\eps}\sum_{\substack{d\mid n \\ d\leq n/(\log n)^{\eps/2}}} d \ll n.
  \end{equation*} 
  Putting this into~(\ref{eq:centerscoincide}), we have
  \begin{align*}
    \EE\brackets*{\sum_{m,n=1}^N \lambda\parens*{\calE_m^Q\cap\calE_n^Q}}
    &\ll \sum_{m=1}^N\sum_{n=1}^Np_mp_n\psi(m)\psi(n)+\sum_{n=1}^Np_n\psi(n) \\
    &\ll \parens*{\sum_{n=1}^Np_n\psi(n)}^2.
  \end{align*}
  The lemma is proved by combining this with~(\ref{eq:inviewof}).
\end{proof}

\section{Variance calculations}
\label{sec:vari-calc}

As in Section~\ref{sec:overlaps}, we continue to view the subinterval
$J\subset[0,1]$ as a random object. 

\begin{lemma}\label{lem:varone}
  Let $J\subset [0,1]$ be randomly uniformly chosen from
  $\set{J_1, \dots, J_M}$ for a fixed $M\in\NN$ as in Lemma~\ref{lem:expectedoverlaps}. Then
  \begin{equation}
    \overline\sigma^2\brackets*{\sum_{1\leq m,n\leq N} \lambda\parens*{\calE_m^Q\cap\calE_n^Q\cap J}} 
    \leq \sum_{1\leq m,n\leq N} \overline\sigma^2\brackets*{\lambda\parens*{\calE_m^Q\cap\calE_n^Q\cap J}}.\label{var_of_sums<=sum_of_var}
  \end{equation}
\end{lemma}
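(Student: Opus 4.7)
The plan is to expand the variance using bilinearity of covariance:
\begin{equation*}
\overline\sigma^2\brackets*{\sum_{m,n=1}^N X_{mn}} = \sum_{(m,n)} \overline\sigma^2(X_{mn}) + \sum_{(m,n) \neq (m',n')} \overline{\mathrm{Cov}}(X_{mn}, X_{m'n'}),
\end{equation*}
where $X_{mn} := \lambda(\calE_m^Q \cap \calE_n^Q \cap J)$ and all covariances are taken in the extended space. The claimed inequality then reduces to showing that the off-diagonal cross-covariances sum to a non-positive quantity.

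To analyse those cross-covariances, I would condition on $J$ and apply the total covariance identity, so that $\overline{\mathrm{Cov}}(X_{mn}, X_{m'n'}) = \overline\EE[\mathrm{Cov}_Q(X_{mn}, X_{m'n'} \mid J)] + \mathrm{Cov}_J(\EE_Q[X_{mn} \mid J], \EE_Q[X_{m'n'} \mid J])$. For the conditional-covariance term, the mutual independence of $Q_1, Q_2, \dots$ forces $\mathrm{Cov}_Q(X_{mn}, X_{m'n'} \mid J) = 0$ whenever $\{m,n\} \cap \{m',n'\} = \emptyset$, which eliminates the majority of the cross-terms at one stroke. For the covariance-of-conditional-expectations term, $\EE_Q[X_{mn} \mid J] = p_m p_n \lambda(\calE_m^S \cap \calE_n^S \cap J)$ (for $m \neq n$) and $\EE_Q[X_{nn} \mid J] = p_n \lambda(\calE_n^S \cap J)$ are explicit deterministic functions of $J$, whose $J$-covariances as $J$ ranges uniformly over $\{J_1, \dots, J_M\}$ can be written down directly from the partition.

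The hard part is to show that the residual contributions---from index pairs with $\{m,n\} \cap \{m',n'\} \neq \emptyset$, together with the $J$-induced covariance of conditional expectations---sum to a quantity no larger than the diagonal variance total. I would attempt this via the representation
\begin{equation*}
\sum_{m,n=1}^N X_{mn} = \int_J \bigl(\,{\textstyle \sum_{n=1}^N \sum_{a \in S_n}} \xi_{n,a} \mathbf{1}_{I_{n,a}}(x)\bigr)^2 \, dx,
\end{equation*}
where $\xi_{n,a} := \mathbf{1}[a \in Q_n]$ are independent Bernoulli variables and $I_{n,a} := \bigl(\tfrac{a - \psi(n)}{n}, \tfrac{a + \psi(n)}{n}\bigr)$; this identity uses that the constituent intervals of $\calE_n^S$ are pairwise disjoint for each fixed $n$. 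From this expression the variance of the sum becomes a quadratic form in independent Bernoulli variables, whose pairwise covariance contributions can be tracked explicitly and compared, term-by-term, against the diagonal sum $\sum_{m,n} \overline\sigma^2(X_{mn})$ using the product form $p_m p_n(1-p_m p_n) \geq p_m(1-p_m) p_n(1-p_n)$-type inequalities for products of independent Bernoullis.
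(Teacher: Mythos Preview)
Your opening decomposition is correct and matches the paper's: both reduce the claim to showing that the off-diagonal covariances $\overline{\mathrm{Cov}}(X_{k\ell},X_{mn})$ sum to something non-positive. From there, however, your route diverges from the paper's and the argument is left genuinely incomplete.

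The paper does \emph{not} condition on $J$ alone. It partitions the quadruples $(k,\ell,m,n)$ according to $\{k,\ell\}\cap\{m,n\}$. When this intersection is empty it asserts that $X_{k\ell}$ and $X_{mn}$ are independent in the extended $(Q,J)$-space, so those covariances vanish outright. When exactly one index is shared, say $k$, the paper conditions on the \emph{full} shared randomness---the finitely many states $s_i$ of $\calE_k^Q\cap J$---which makes $X_{k\ell}$ and $X_{kn}$ conditionally independent; it then applies a H\"older/Cauchy--Schwarz chain to the expression $\sum_i a_i b_i\,\overline\PP(s_i)-(\sum_i a_i\,\overline\PP(s_i))(\sum_i b_i\,\overline\PP(s_i))$ to conclude that each such covariance is $\leq 0$. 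The remaining quadruples (both indices shared) give exactly the diagonal right-hand side.

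Your conditioning on $J$ alone does kill $\mathrm{Cov}_Q(X_{mn},X_{m'n'}\mid J)$ in the disjoint-index case, but it leaves the term $\mathrm{Cov}_J\bigl(\EE_Q[X_{mn}\mid J],\EE_Q[X_{m'n'}\mid J]\bigr)$ completely unhandled: these are covariances of non-negative functions of $J$ over $\{J_1,\dots,J_M\}$, with no a priori sign, and they persist for \emph{every} off-diagonal pair, not just the shared-index ones. You note this but do not bound it. For the shared-index case your integral identity is correct, but the variance of $\int_J(\sum \xi_{n,a}\mathbf 1_{I_{n,a}})^2\,dx$ is a degree-four polynomial in the independent Bernoullis (with $J$ still random), and the single inequality $p_mp_n(1-p_mp_n)\geq p_m(1-p_m)p_n(1-p_n)$ does not by itself organise those fourth-moment cross-terms into the diagonal sum $\sum\overline\sigma^2(X_{mn})$. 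In short, the two mechanisms the paper actually uses---conditioning on the full shared state $(\calE_k^Q,J)$ rather than on $J$, and the ensuing inequality argument showing each one-shared-index covariance is individually $\leq 0$---are absent from your sketch, and nothing you have written replaces them.
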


\begin{proof}
  Beginning with a definition, we have
  \begin{align}
    &\overline\sigma^2\brackets*{\sum_{1\leq m,n\leq N}\lambda\parens*{\calE_m^Q\cap\calE_n^Q\cap J}} \notag\\
    &=\overline\EE\brackets*{\parens*{\sum_{1\leq m,n\leq N}\lambda\parens*{\calE_m^Q\cap\calE_n^Q\cap J}}^2} - \parens*{\overline\EE\brackets*{\sum_{1\leq m,n\leq N}\lambda\parens*{\calE_m^Q\cap\calE_n^Q\cap J}}}^2 \nonumber \\
    &=\sum_{1\leq k,\ell, m,n\leq N}\overline\EE\brackets*{\lambda\parens*{\calE_k^Q\cap\calE_\ell^Q\cap J}\lambda\parens*{\calE_m^Q\cap\calE_n^Q\cap J}} \label{eq:Ell}\\
    &\quad- \sum_{1\leq k,\ell, m,n\leq N}\overline\EE\brackets*{\lambda\parens*{\calE_k^Q\cap\calE_\ell^Q\cap J}}\EE\brackets*{\lambda\parens*{\calE_m^Q\cap\calE_n^Q\cap J}}.\label{eq:ElEl}
  \end{align}
  The above summations take place over all possible
  $1\leq k, \ell, m, n\leq N$. Let us split the task up over the
  following indexing subsets:
  \begin{align*}
    A &= \set{1\leq k, \ell, m, n\leq N : \set{k, \ell}\cap\set{m, n}=\emptyset}\\
    B &= \set{1\leq k, \ell, m, n\leq N : \set{k, \ell}\cap\set{m, n}=\set{k}, k\neq l}\\
    C &= \set{1\leq k, \ell, m, n\leq N : \set{k, \ell}\cap\set{m, n}=\set{k,\ell}},
  \end{align*}
  where, of course, the intention is to consider these with the
  appropriate permutations as well.
  The advantage is that the sums over $A$ and $B$ are relatively easy
  to handle. Indeed, notice that whenever $(k,\ell, m, n)$ belongs to
  $A$, the random variables
  $\lambda\parens*{\calE_k^Q\cap\calE_\ell^Q\cap J}$ and
  $\lambda\parens*{\calE_m^Q\cap\calE_n^Q\cap J}$ are independent, hence the
  expectation of their product is the product of their
  expectations. That is, the sum over $A$ in~(\ref{eq:Ell}) cancels
  the sum over $A$ in~(\ref{eq:ElEl}). Now consider the sums in~(\ref{eq:Ell}) and~(\ref{eq:ElEl}) over $B$.
  We claim that for any $(k,\ell, k, n)$ such that
  $\ell\neq n$, we have
  \begin{equation*}
\overline\EE\brackets*{\lambda\parens*{\calE_k^Q\cap\calE_\ell^Q\cap J}\lambda\parens*{\calE_k^Q\cap\calE_n^Q\cap J}} -\overline\EE\brackets*{\lambda\parens*{\calE_k^Q\cap\calE_\ell^Q\cap J}}\overline\EE\brackets*{\lambda\parens*{\calE_k^Q\cap\calE_n^Q\cap J}} \leq 0. 
  \end{equation*}
  To see that, we partition a phase space according to the different
  states $\calE_k^Q\cap J$ can occupy, of which there are finitely many. Let
  us label those states $\set{s_1, \dots, s_d}$. Notice that given any
  of these states, the random variables
  $\lambda\parens*{\calE_k^Q\cap\calE_\ell^Q\cap J}$ and
  $\lambda\parens*{\calE_k^Q\cap\calE_n^Q\cap J}$ are (conditionally)
  independent. Therefore, 
\begin{align*}
  &\overline\EE\brackets*{\lambda\parens*{\calE_k^Q\cap\calE_\ell^Q\cap J}\lambda\parens*{\calE_k^Q\cap\calE_n^Q\cap J}} -\overline\EE\brackets*{\lambda\parens*{\calE_k^Q\cap\calE_\ell^Q\cap J}}\overline\EE\brackets*{\lambda\parens*{\calE_k^Q\cap\calE_n^Q\cap J}} \\
  &= \sum_{i=1}^d \overline\EE\brackets*{\lambda\parens*{\calE_k^Q\cap\calE_\ell^Q\cap J}\lambda\parens*{\calE_k^Q\cap\calE_n^Q\cap J}\mid s_i}\overline\PP(s_i) -\overline\EE\brackets*{\lambda\parens*{\calE_k^Q\cap\calE_\ell^Q\cap J}}\overline\EE\brackets*{\lambda\parens*{\calE_k^Q\cap\calE_n^Q\cap J}} \\
  &\overset{\substack{\textrm{Cond.} \\ \textrm{indep.}}}{=} \sum_{i=1}^d \overline\EE\brackets*{\lambda\parens*{\calE_k^Q\cap\calE_\ell^Q\cap J}\mid s_i}\overline\EE\brackets*{\lambda\parens*{\calE_k^Q\cap\calE_n^Q\cap J}\mid s_i}\overline\PP(s_i) \\
  &\quad- \parens*{\sum_{i=1}^d\overline\EE\brackets*{\lambda\parens*{\calE_k^Q\cap\calE_\ell^Q\cap J}\mid s_i}\overline\PP(s_i)}\parens*{\sum_{i=1}^d\overline\EE\brackets*{\lambda\parens*{\calE_k^Q\cap\calE_n^Q\cap J}\mid s_i}\overline\PP(s_i)} \\
  &\overset{\substack{\textrm{Reverse}\\ \textrm{H{\"o}lder}}}{\leq} \sum_{i=1}^d \overline\EE\brackets*{\lambda\parens*{\calE_k^Q\cap\calE_\ell^Q\cap J}\mid s_i}\overline\EE\brackets*{\lambda\parens*{\calE_k^Q\cap\calE_n^Q\cap J}\mid s_i}\overline\PP(s_i) \\
  &\quad- \parens*{\sum_{i=1}^d\parens*{\overline\EE\brackets*{\lambda\parens*{\calE_k^Q\cap\calE_\ell^Q\cap J}\mid s_i}}^2\overline\PP(s_i)}^{1/2}\parens*{\sum_{i=1}^d\parens*{\overline\EE\brackets*{\lambda\parens*{\calE_k^Q\cap\calE_n^Q\cap J}\mid s_i}}^2\overline\PP(s_i)}^{1/2}\\
  &\overset{\substack{\textrm{Cauchy--}\\ \textrm{Schwarz}}}{\leq} 0.
  \end{align*}
  The lemma is proved, noticing that the sums in~(\ref{eq:Ell}) and~(\ref{eq:ElEl}) over $C$ give exactly the right-hand side of~(\ref{var_of_sums<=sum_of_var}). 
\end{proof}

\begin{lemma}\label{lem:vartwo}
  Let $M\in \NN$ and let $J\in \set{J_1, \dots, J_M}$ be chosen
  randomly uniformly as in Lemma~\ref{lem:expectedoverlaps}. Then,
  for $m\neq n$,
  \begin{equation*}
    \overline\sigma^2\brackets*{\lambda\parens*{\calE_m^Q\cap \calE_n^Q\cap J}} \ll p_mp_n\psi(m)\psi(n) + \EE\brackets*{\lambda\parens*{\calE_m^Q\cap\calE_n^Q}}.
  \end{equation*}
\end{lemma}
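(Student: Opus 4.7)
Let $F := \calE_m^Q \cap \calE_n^Q$, so that the random variable of interest is $\lambda(F\cap J)$. My approach is to apply the law of total variance, conditioning on the $P$-randomisation (which determines $Q$ and hence $F$):
\begin{equation*}
  \overline{\sigma}^2\brackets*{\lambda(F\cap J)}
  = \EE\brackets*{\sigma_J^2(\lambda(F\cap J)\mid P)}
  + \sigma^2\parens*{\EE_J[\lambda(F\cap J)\mid P]}.
\end{equation*}
For the first summand, fix $P$; since $\lambda(F\cap J_i)\leq \lambda(J_i)=1/M$ for every $i$, one obtains
\begin{equation*}
  \EE_J\brackets*{\lambda(F\cap J)^2\mid P}
  = \frac{1}{M}\sum_{i=1}^{M}\lambda(F\cap J_i)^2
  \leq \frac{1}{M^{2}}\lambda(F)
  = \lambda(J)^2\lambda(F),
\end{equation*}
so taking $\EE$ on the outside contributes at most $\lambda(J)^2\EE\brackets*{\lambda(F)}$. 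Since $\EE_J[\lambda(F\cap J)\mid P]=\lambda(J)\lambda(F)$, the second summand equals $\lambda(J)^2\sigma^2\brackets*{\lambda(F)}$, and the task reduces to controlling $\sigma^2\brackets*{\lambda(F)}$.

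For this I would write
\begin{equation*}
  \lambda(F)=\sum_{a\in S_m}\sum_{b\in S_n}X_a Y_b\, w(a,b),
  \qquad
  w(a,b):=\lambda\parens*{\calE_{m,a}\cap\calE_{n,b}},
\end{equation*}
where $X_a=\mathbf{1}[a\in Q_m]$ and $Y_b=\mathbf{1}[b\in Q_n]$ are independent Bernoullis of parameters $p_m$, $p_n$, and $\calE_{k,c}:=((c-\psi(k))/k,\,(c+\psi(k))/k)$ is the constituent interval around $c/k$. Because $m\neq n$ the families $\set*{X_a}_{a\in S_m}$ and $\set*{Y_b}_{b\in S_n}$ are totally independent, so expanding $\sigma^2\brackets*{\lambda(F)}$ as a covariance sum leaves only three non-vanishing configurations of quadruples $(a,a',b,b')$, namely $(a=a',b=b')$, $(a=a',b\neq b')$, and $(a\neq a',b=b')$, with covariances bounded by $p_mp_n$, $p_mp_n^2$, and $p_m^2p_n$ respectively. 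Using $w(a,b)\leq\min(2\psi(m)/m,2\psi(n)/n)$ together with the marginals $\sum_b w(a,b)\leq\lambda(\calE_{m,a})=2\psi(m)/m$ and $\sum_a w(a,b)\leq 2\psi(n)/n$, each of the three sums is seen to be bounded by a universal constant times $p_mp_n\lambda\parens*{\calE_m^S\cap\calE_n^S}=\EE\brackets*{\lambda(F)}$, whence $\sigma^2\brackets*{\lambda(F)}\ll \EE\brackets*{\lambda(F)}$.

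Combining the two halves gives $\overline{\sigma}^2\brackets*{\lambda(F\cap J)}\ll\EE\brackets*{\lambda(F)}$, which already implies the stated inequality. The principal obstacle is the bookkeeping in the middle step: one must carefully separate the ``shared-$a$'' and ``shared-$b$'' sums into pairs of intervals whose centres $a/m$ and $b/n$ coincide and pairs whose centres do not coincide, because the concentric sub-sum, analogous to the last summand of~(\ref{eq:centerscoincide}) in Lemma~\ref{lem:expectedoverlaps}, is exactly the piece that naturally produces the $p_mp_n\psi(m)\psi(n)$ term appearing in the statement of the lemma.
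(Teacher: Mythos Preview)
Your argument is correct and actually yields the sharper bound
\(
  \overline\sigma^2\brackets*{\lambda(F\cap J)} \ll \lambda(J)^2\,\EE\brackets*{\lambda(F)}
\),
so the term \(p_mp_n\psi(m)\psi(n)\) is not needed at all. In particular, the concern you raise in the last paragraph about separating concentric from non-concentric pairs is a red herring: the three covariance configurations you isolate are already controlled by the simple estimates \(\sum_b w(a,b)\leq 2\psi(m)/m\), \(\sum_a w(a,b)\leq 2\psi(n)/n\), and \(w(a,b)\leq 1\), giving \(\sigma^2[\lambda(F)]\leq 3\,\EE[\lambda(F)]\) directly, with no reference to whether \(a/m=b/n\).

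The paper takes a different route. Instead of the law of total variance, it drops the squared-mean term to obtain
\(
  \overline\sigma^2 \leq \tfrac{1}{M}\sum_{i}\EE\brackets*{\lambda(F\cap J_i)^2}
  = \tfrac{1}{M}\sum_i \sigma^2\brackets*{\lambda(F\cap J_i)} + \tfrac{1}{M}\sum_i \EE\brackets*{\lambda(F\cap J_i)}^2,
\)
and bounds the second sum by \(\EE[\lambda(F)]\). For the first sum the paper does \emph{not} use your bilinear expansion; instead it writes \(\calE_m^S\cap J_i = I_1\cup\dots\cup I_b\) as a union of its constituent intervals and, under the harmless extra assumption \(\psi\leq 1/4\), observes that the pieces \(\lambda(I_k\cap\calE_m^Q\cap\calE_n^Q\cap J_i)\) are independent across \(k\). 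Each piece is then bounded by \(\delta\) times a binomial count of how many \(\calE_n^Q\)-intervals fall in \(I_k\), and a second-moment computation together with the crude bound \(q_k\leq 2\psi(m)n/m+3\) produces the \(p_mp_n\psi(m)\psi(n)\) term. Your bilinear/covariance decomposition is more streamlined: it avoids the \(\psi\leq 1/4\) reduction, avoids the interval-counting with \(q_k\), and lands immediately on the cleaner bound \(\ll\EE[\lambda(F)]\). The paper's decomposition, on the other hand, makes the independence structure at the level of \(\calE_m\)-intervals explicit and keeps all estimates in terms of the geometric quantities \(\delta\) and \(q_k\).
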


\begin{proof}
  Let $m < n$. We have
  \begin{align}
    \overline\sigma^2\brackets*{\lambda\parens*{\calE_m^Q\cap \calE_n^Q\cap J}} 
    &= \frac{1}{M}\sum_{i=1}^M \EE\brackets*{\lambda\parens*{\calE_m^Q\cap \calE_n^Q\cap J_i}^2} - \frac{1}{M^2}\parens*{\sum_{i=1}^M \EE\brackets*{\lambda\parens*{\calE_m^Q\cap \calE_n^Q\cap J_i}}}^2 \notag \\
    &\leq \frac{1}{M}\sum_{i=1}^M \EE\brackets*{\lambda\parens*{\calE_m^Q\cap \calE_n^Q\cap J_i}^2} \notag \\
    &= \frac{1}{M}\sum_{i=1}^M \sigma^2\brackets*{\lambda\parens*{\calE_m^Q\cap \calE_n^Q\cap J_i}}+ \frac{1}{M} \sum_{i=1}^M \EE\brackets*{\lambda\parens*{\calE_m^Q\cap \calE_n^Q\cap J_i}}^2 \notag \\
    &\leq \frac{1}{M}\sum_{i=1}^M \sigma^2\brackets*{\lambda\parens*{\calE_m^Q\cap \calE_n^Q\cap J_i}}+ \EE\brackets*{\lambda\parens*{\calE_m^Q\cap \calE_n^Q}}. \label{eq:twoterms}
  \end{align}
  Now we only have to treat the first term in the last inequality above.
 
  We lose no generality in assuming $\psi$ takes values in
  $[0,1/4]$. The effect of this extra assumption is to ensure that no
  interval from $\calE_n^S$ can intersect more than one interval from
  $\calE_m^S$. To see it, note that any two intervals of $\calE_m^S$
  are separated by a distance of at least $1/(2m)$. Meanwhile, the
  intervals of $\calE_n^S$ have length $2\psi(n)/n < 1/(2n) < 1/(2m)$.

  Now, even though the intervals making up $\calE_m^S\cap\calE_n^S\cap J_i$
  are not chosen independently to form $\calE_m^Q\cap\calE_n^Q\cap J_i$, they
  can be grouped into  finitely many independent sub-unions, each
  corresponding to a different interval from $\calE_m^S\cap J_i$. Let us label
  these intervals
  \begin{equation*}
    \calE_m^S\cap J_i = I_1\cup I_2 \cup \dots \cup I_{b},
  \end{equation*}
  so that the random variables
  $\lambda(I_k\cap\calE_m^Q\cap \calE_n^Q\cap J_i)$ ($k=1,\dots, b$)
  are independent and
  \begin{equation*}
    \lambda(\calE_m^Q\cap \calE_n^Q\cap J_i) = \sum_{k=1}^{b}\lambda(I_k\cap\calE_m^Q\cap \calE_n^Q\cap J_i).
  \end{equation*}
  Hence, we have 
  \begin{equation*}
    \sigma^2\brackets*{\lambda(\calE_m^Q\cap \calE_n^Q\cap J_i)} = \sum_{k=1}^{b}\sigma^2\brackets*{\lambda(I_k\cap\calE_m^Q\cap \calE_n^Q\cap J_i)}.
  \end{equation*}
  Let $q_k$ ($k=1, \dots, b$) denote the number of intervals
  of $\calE_n^S$ which intersect $I_k$. Each of these intersections has measure
  bounded above by
  $\delta := 2\min\set*{\frac{\psi(m)}{m}, \frac{\psi(n)}{n}}$. Note then that
  \begin{align*}
   \sigma^2\brackets*{\lambda(I_k\cap\calE_m^Q\cap \calE_n^Q\cap J_i)} 
    &\leq \EE\brackets*{\lambda(I_k\cap\calE_m^Q\cap \calE_n^Q\cap J_i)^2}\\
    &\leq \delta^2 p_m\sum_{j=0}^{q_k}\binom{q_k}{j} p_n^j(1-p_n)^{q_k-j} j^2\\
    &= \delta^2 p_mq_k\sum_{j=1}^{q_k}\binom{q_k-1}{j-1} p_n^j(1-p_n)^{q_k-j} j\\
    &\leq \delta^2 p_mp_n q_k^2.
  \end{align*}
  Accordingly,
  \begin{equation}\label{eq:sigma}
    \sigma^2\brackets*{\lambda\parens*{\calE_m^Q\cap\calE_n^Q\cap J_i}} 
    \leq p_mp_n \delta^2 \sum_{k=1}^{b} q_k^2.
  \end{equation}
  Note that we must have $q_k \leq \parens*{\frac{2\psi(m)}{m}} n + 3$
  for all $k=1, \dots, b$, so
  \begin{align*}
    \sum_{k=1}^{b} q_k^2
    &\leq \sum_{k=1}^{b} \parens*{\parens*{\frac{2\psi(m)}{m}} n + 3}^2\\
    &\leq  m\parens*{\parens*{\frac{2\psi(m)}{m}} n + 3}^2.
  \end{align*}
  Putting this into~(\ref{eq:sigma}), a routine inspection reveals
  that 
  \begin{equation*}
    \sigma^2\brackets*{\lambda\parens*{\calE_m^Q\cap\calE_n^Q\cap J_i}} \ll p_mp_n\psi(m)\psi(n).
  \end{equation*}
  Together with~(\ref{eq:twoterms}) this proves the lemma.
\end{proof}

\section{Proof of Theorem~\ref{thm}}
\label{sec:proof}

We have been working toward the following lemma, which states that we
may apply Lemma~\ref{lem:reversebc} in a way described in
Section~\ref{sec:strategy-proof}.

\begin{lemma}[Almost sure quasi-independence on average]\label{lem:asqia}
  Assume we are in the divergence case of Theorem~\ref{thm}. Let
  $M\in \NN$ and express $[0,1] = J_1\cup\dots\cup J_M$, where the
  $J_i$ are  disjoint intervals of length $1/M$. Then, $\PP$-almost surely, for
  any $J\in \set{J_1, \dots, J_M}$ we have
  \begin{equation*}
    \sum_{1\leq m,n \leq N} \lambda\parens*{\calE_m^Q\cap\calE_n^Q\cap J} \ll \frac{1}{\lambda(J)}\parens*{\sum_{n\leq N}\lambda\parens*{\calE_n^Q\cap J}}^2
  \end{equation*}
  for infinitely many $N$.
\end{lemma}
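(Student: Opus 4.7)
The plan is to combine the variance bounds from Lemmas~\ref{lem:varone} and~\ref{lem:vartwo}, the expectation bound from Lemma~\ref{lem:expectedoverlaps}, and the almost sure lower bound from Lemma~\ref{lem:asmoas}, via Chebyshev's inequality and the Borel--Cantelli lemma along a suitably sparse subsequence $\{N_t\}$. Writing $T_N := \sum_{1\leq m,n\leq N}\lambda\parens*{\calE_m^Q\cap\calE_n^Q\cap J}$, the intermediate target is the concentration estimate $T_{N_t} \ll \lambda(J)\parens*{\sum_{n\leq N_t} p_n\psi(n)}^2$, holding $\overline\PP$-almost surely in the extended space. Rearranging this against Lemma~\ref{lem:asmoas} then yields the desired QIA inequality.

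The first step is to establish a variance bound of the shape $\overline\sigma^2[T_N] \ll \parens*{\sum_{n\leq N}p_n\psi(n)}^2$. Lemma~\ref{lem:varone} reduces this to the individual variances. For off-diagonal $m\neq n$, Lemma~\ref{lem:vartwo} gives $p_mp_n\psi(m)\psi(n) + \EE\brackets*{\lambda\parens*{\calE_m^Q\cap\calE_n^Q}}$, and the double sum of both pieces is controlled by $\parens*{\sum p_n\psi(n)}^2$ using the computations already appearing inside the proof of Lemma~\ref{lem:expectedoverlaps}. For the diagonal $m=n$, I would use the pointwise inequality $\lambda(\calE_n^Q\cap J)\leq 2\psi(n)$ to deduce $\overline\sigma^2\brackets*{\lambda(\calE_n^Q\cap J)}\ll \lambda(J)p_n\psi(n)^2$; summing in $n$ gives $\lambda(J)\sum p_n\psi(n)$, which is absorbed into $\parens*{\sum p_n\psi(n)}^2$ in the divergence case.

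With this variance bound in hand, Chebyshev's inequality yields
\begin{equation*}
\overline\PP\brackets*{T_N - \overline\EE[T_N] > \lambda(J)\parens*{\sum_{n\leq N}p_n\psi(n)}^2} \ll \frac{1}{\lambda(J)^2\parens*{\sum_{n\leq N}p_n\psi(n)}^2}.
\end{equation*}
Choosing $\{N_t\}$ sparse enough that $\sum_t\parens*{\sum_{n\leq N_t}p_n\psi(n)}^{-2}$ converges, Borel--Cantelli in the extended space gives $\overline\PP$-almost surely that $T_{N_t}\ll \lambda(J)\parens*{\sum_{n\leq N_t}p_n\psi(n)}^2$ for all but finitely many $t$. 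Since $J$ is uniform on the \emph{finite} set $\set*{J_1,\dots,J_M}$, a union bound over the $M$ choices transfers this to a $\PP$-almost sure statement holding simultaneously for every $J_i$.

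Finally, passing if necessary to a common sparse subsequence, combine with Lemma~\ref{lem:asmoas}: its lower bound $\sum_{n\leq N_t}\lambda\parens*{\calE_n^Q\cap J}\gg \lambda(J)\eps\sum_{n\leq N_t} p_n\psi(n)$ squared, together with the Chebyshev upper bound on $T_{N_t}$, rearranges to the claimed inequality $T_{N_t}\ll \lambda(J)^{-1}\parens*{\sum_{n\leq N_t}\lambda(\calE_n^Q\cap J)}^2$. The main technical obstacle is the variance bookkeeping, in particular handling the diagonal $m=n$ contribution so that no spurious factor of $\lambda(J)^{-1}$ sneaks into $\overline\sigma^2[T_N]$, which would defeat Chebyshev; once that is managed cleanly, the rest is a direct concentration argument combined with the lower bound already prepared in Section~\ref{sec:measures}.
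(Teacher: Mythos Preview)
Your proposal is correct and follows essentially the same route as the paper's own proof: bound $\overline\sigma^2[T_N]$ by combining Lemmas~\ref{lem:varone} and~\ref{lem:vartwo} with the diagonal handled separately, apply Chebyshev and Borel--Cantelli along a sparse subsequence of the $\{N_t\}$ from Lemma~\ref{lem:asmoas}, and then pass from the extended space back to $\PP$-almost sure statements for every $J_i$. The only cosmetic differences are that the paper writes the diagonal contribution as $\sum n p_n(1-p_n)(\psi(n)/n)^2$ rather than your $\lambda(J)\sum p_n\psi(n)^2$ (both are absorbed into $(\sum p_n\psi(n))^2$ in the divergence case), and the paper invokes Fubini where you say ``union bound over the $M$ choices'', which amounts to the same thing since $\overline\PP(B)=\tfrac{1}{M}\sum_i\PP(B_i)=0$ forces each $\PP(B_i)=0$.
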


\begin{proof}
  Let us begin by supposing that $J$ is chosen randomly uniformly from
  $\set{J_1, \dots, J_M}$ as in the lemmas of
  Sections~\ref{sec:overlaps} and~\ref{sec:vari-calc}. Combining
  Lemmas~\ref{lem:varone} and~\ref{lem:vartwo}, we have
  \begin{align*}
    \overline\sigma^2\brackets*{\sum_{1\leq m,n\leq N} \lambda\parens*{\calE_m^Q\cap\calE_n^Q\cap J}} 
    &\ll \sum_{1\leq m \neq n \leq N}p_m p_n\psi(m)\psi(n) +\sum_{1 \leq m\neq n \leq N}\EE\brackets*{\lambda\parens*{\calE_m^Q\cap\calE_n^Q}} \\
    &\quad + \sum_{1\leq n \leq N} n p_n(1-p_n)\parens*{\frac{\psi(n)}{n}}^2 \\
    &\ll \parens*{\sum_{1\leq n \leq N}p_n\psi(n)}^2 +\EE\brackets*{\sum_{1 \leq m\neq n \leq N}\lambda\parens*{\calE_m^Q\cap\calE_n^Q}} \\
    &\ll \parens*{\sum_{1\leq n \leq N}p_n\psi(n)}^2,
  \end{align*}
  where the last line is obtained by using~(\ref{eq:inviewof}) and
  Lemma~\ref{lem:expectedoverlaps}. Given that the last expression
  increases to $\infty$ as $N$ increases, we are guaranteed by
  Chebyshev's inequality and the Borel--Cantelli lemma that for any $c>0$ and sufficiently sparse sequence of $N$'s, we
  $\overline\PP$-almost surely have
  \begin{equation*}
    \sum_{1\leq m,n \leq N} \lambda\parens*{\calE_m^Q\cap\calE_n^Q\cap J} \ll \overline\EE\brackets*{\sum_{1\leq m,n \leq N} \lambda\parens*{\calE_m^Q\cap\calE_n^Q\cap J}} + c\parens*{\sum_{1\leq n \leq N}p_n\psi(n)}^2
  \end{equation*}
  for infinitely many $N$ from said sequence. (In particular, we
  should take a sufficiently sparse subsequence of the sequence
  $\set{N_t}$ appearing in Lemma~\ref{lem:asmoas} for the last bound in the next line to hold.) Taking $c\ll \lambda(J)$ the latter is
  bounded by
  \begin{equation*}
    \overset{\textrm{Lem.~\ref{lem:expectedoverlaps}}}{\ll} \lambda(J) \parens*{\sum_{n=1}^N p_n \psi(n)}^2 \overset{\textrm{Lem.~\ref{lem:asmoas}}}{\ll} \frac{1}{\lambda(J)}\parens*{\sum_{n=1}^N \lambda\parens*{\calE_n^Q\cap J}}^2.
  \end{equation*}
  Now, by Fubini's theorem we may conclude that $\PP$-almost surely these
  estimates hold for \emph{every} $J\in \set{J_1, \dots, J_M}$, as required.
\end{proof}

We are now ready to state the proof of Theorem~\ref{thm}.

\begin{proof}[Proof of the divergence part of Theorem~\ref{thm}]
  Assume the conditions of the theorem and furthermore suppose,
  without loss of generality, that $\psi$ takes values in $[0,
  1/4]$.
  Lemma~\ref{lem:asqia} implies that it is almost surely the case that
  for any $M\in\NN$ and any arc $J\subset\set{J_1, \dots, J_M}$ we have
  \begin{equation*}
    \limsup_{N\to\infty}\parens*{\sum_{n=1}^N\lambda\parens*{\calE_n^Q\cap J}}^2 \parens*{\sum_{m,n =1}^N \lambda\parens*{\calE_m^Q\cap\calE_n^Q\cap J}}^{-1}\gg \lambda(J),
  \end{equation*}
  with an implied constant that does not depend on $J$. Since $\sum_{n=1}^\infty\lambda\parens*{\calE_n^Q\cap J}$ diverges by Lemma~\ref{lem:asmoas}, Lemma~\ref{lem:reversebc} implies that almost surely we have 
  \begin{equation*}
    \lambda\parens*{\limsup_{n\to\infty}\calE_n^Q\cap J} \gg \lambda(J) 
  \end{equation*}
  for any $M\in \NN$ and $J\subset\set{J_1, \dots, J_M}$. By a simple
  approximation argument this actually holds for an arbitrary interval
  $J\subset[0,1]$ and thus establishes that almost surely
  $\limsup_{n\to\infty} \calE_n^Q$ has full measure in $[0,1]$  (see, for
  example, \cite[Lemma~1.6]{Harman}, which is a
  standard application of the Lebesgue density theorem). Finally, since
  $\limsup_{n\to\infty} \calE_n^Q\subset W^P(\psi)$, we almost surely
  have $\lambda(W^P(\psi))=1$, as desired.
\end{proof}

\begin{proof}[Proof of the convergence part of Theorem~\ref{thm}]
  Assume that $\sum p_n \psi(n)\neq 0$, for
  otherwise there is nothing to prove. 

  Note that for every $N$
  \begin{equation*}
    \EE\brackets*{\sum_{n=1}^N \lambda\parens*{\calE_n^P}} =2 \sum_{n=1}^N p_n \psi(n). 
  \end{equation*}
  Also, by independence, 
  \begin{align*}
    \sigma^2\brackets*{\sum_{n=1}^N \lambda\parens*{\calE_n^P}} 
    &=\sum_{n=1}^N \sigma^2\brackets*{\lambda\parens*{\calE_n^P}} \\
    &=\sum_{n=1}^N \parens*{\frac{2\psi(n)}{n}}^2p_n (1-p_n) n,
  \end{align*}
  which also converges, say, to $\sigma^2>0$. By Chebyshev's inequality, for any $c>1$, 
  \begin{equation*}
    \PP\brackets*{\sum_{n=1}^N \lambda\parens*{\calE_n^P} \geq c \EE\brackets*{\sum_{n=1}^N \lambda\parens*{\calE_n^P}}} \leq \frac{\sigma^2}{(c-1)^2 \EE\brackets*{\sum_{n=1}^N \lambda\parens*{\calE_n^P}}^2}.
  \end{equation*}
  Choose $c>1$ large enough, so that the right-hand side is $<1/2$ for 
  all large $N$. Now we see that for all large $N$
  \begin{equation*}
    \PP\brackets*{\sum_{n=1}^N \lambda\parens*{\calE_n^P}  <  c \EE\brackets*{\sum_{n=1}^N \lambda\parens*{\calE_n^P}}} \geq \frac{1}{2}. 
  \end{equation*}
  Therefore, we  almost surely have that for infinitely many $N$ 
  \begin{equation}\label{last}
    \sum_{n=1}^N \lambda\parens*{\calE_n^P}  <  c \EE\brackets*{\sum_{n=1}^N \lambda\parens*{\calE_n^P}} = 2c\sum_{n=1}^N p_n \psi(n).
  \end{equation}
  From the facts that the right-most sum in (\ref{last}) converges and the left-most sum
  increases with $N$, we conclude that, almost surely,
  $\sum_{n=1}^\infty \lambda\parens*{\calE_n^P}$ converges. Thus, by the
  Borel--Cantelli lemma, $\lambda\parens*{W^P(\psi)}=0$ and the proof is complete.
\end{proof}

\subsection*{Acknowledgments}
\label{acknowledgments}

The first author was supported by the Austrian Science Fund project
F5510-N26. We started this project at the suggestion of Christoph
Aistleitner, who observed that the partial reduction method
from~\cite{LaimaPaper} could be useful for the problems posed
in~\cite{random_fractions}, and generously communicated the idea to
us.


\bibliographystyle{plain}

\bibliography{random_cognates.bib}

\end{document}